\numberwithin{equation}{section}%Equations numbered by subsection.
\numberwithin{figure}{section}
\newtheorem{Theorem}{Theorem}
\numberwithin{Theorem}{section}
\newtheorem{Definition}[Theorem]{Definition}
\begin{document}

\title{A Kolmogorov-Smirnov type test for two inter-dependent random variables}
\author{
%Tobias Kuna\footnote{University of Reading sms07kt@reading.ac.uk},
%Valerio Lucarini\footnote{University of Reading v.lucarini@reading.ac.uk},
Tommy Liu\footnote{University of Reading tommy.liu.academic@gmail.com}
}
\maketitle

\begin{abstract}
Consider $n$ iid random variables, where $\xi_1, \ldots, \xi_n$ are $n$ realisations of a random variable $\xi$ and $\zeta_1, \ldots, \zeta_n$ are $n$ realisations of a random variable $\zeta$. The distribution of each realisation of $\xi$, that is the distribution of \emph{one} $\xi_i$, depends on the value of the corresponding $\zeta_i$, that is the probability $P\left(\xi_i\leq x\right)=F(x,\zeta_i)$. We develop a statistical test to see if the $\xi_1, \ldots, \xi_n$ are distributed according to the distribution function $F(x,\zeta_i)$. We call this new statistical test the condition Kolmogorov-Smirnov test. 
\end{abstract}

\tableofcontents

\section{Introduction}
Random variables often occur in experiments. It is common to have $n$ observations of a quantity, which gives rise to $n$ identically and independently distributed random variables $\xi_1, \ldots, \xi_n$, where each $\xi_i$ (where $i=1,\ldots,n$) are different realisation of the same random variable $\xi$. Analysis is then done to test whether the measured $\xi_i$ follow a particular distribution. This problem was famously answered by Kolmogorov in \cite{ks_test_1}.

In this paper we consider a variant of this problem. In a physical experiment, $n$ measurements of two quantities $\xi$ and $\zeta$ are taken, resulting in $n$ iid pairs of random variables, with $\xi_1, \ldots, \xi_n$ being $n$ realisations of $\xi$ and $\zeta_1, \ldots, \zeta_n$ being $n$ realisations of $\zeta$. A distribution of $\zeta$ is not known, but the distribution of \emph{one} realisation of $\xi$, that is $\xi_i$ depends on the corresponding $\zeta_i$.  

We want to test whether the pairs of $(\xi_i,\zeta_i)$ we collected follow a particular joint distribution. The answer we offer is very similar to the original Kolmogorov-Smirnov test.

\section{Kolmogorov-Smirnov Test}
First we recall well known results about the Kolmogorov-Smirnov statistic and the Kolmogorov-Smirnov test \cite{ks_test_1}. 
Let
\begin{align*}
\xi_1, \xi_2, \ldots, \xi_n
\end{align*}
be $n$ independently and identically distributed real random variables. 
Each $\xi_i$ is distributed with  CDF $F(\cdot)$ as in 
\begin{align*}
P(\xi_i\leq x)=F(x)=\int^x_{-\infty} f(s)\,ds
\end{align*}
where $f(\cdot)$ is the PDF. 
Define a function by $F_n(\cdot)$ by 
\begin{align*}
F_n(x)=
\frac{1}{n}
\sum_{i=1}^n
\mathbf{1}_{(-\infty,x]}
(\xi_i)
\end{align*}
where $\mathbf{1}_{A}$ is the indicator function for a set $A$. 
Thus $F_n(\cdot)$ is the empirical CDF. 
Consider the distance between the real and the empirical CDF through the supremum metric on the space of real functions
\begin{align*}
D_n=
\left\Vert
F_n-F
\right\Vert_\infty
=\sup_{x\in\mathbb{R}}
\left|
\frac{1}{n}
\sum_{i=1}^n
\mathbf{1}_{(-\infty,x]}
(\xi_i)
-F(x)
\right|
\end{align*}
where $D_n$ is called the Kolmogorov-Smirnov statistic or KS statistic. 
We define what we mean by the null hypothesis. 

\begin{Definition}
Let $\xi_1,\xi_2,\ldots,\xi_n$ be $n$ real random variables.
The null hypothesis is 
that each $\xi_i$ is independently distributed with CDF $F(x)$.
\end{Definition}

\noindent We want to know how large or small $D_n$ needs to be before deciding whether to reject the null hypothesis. 
The following Theorem offers a remarkable answer to this problem.

\begin{Theorem}\label{chap_4_thm_ks_test}
Suppose the null hypothesis is true,
then the distribution of $D_n$ depends only on $n$. 
\end{Theorem}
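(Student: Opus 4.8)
The plan is to eliminate the dependence on $F$ by the probability integral transform. First I would invoke the standing (and here necessary) assumption that $F$ is continuous, so that the random variables $U_i := F(\xi_i)$ are independently and uniformly distributed on $[0,1]$; this is the classical probability integral transform, and follows from $P(F(\xi_i)\leq u)=u$ for $u\in(0,1)$ after passing to the generalised inverse of $F$ on the flat stretches where $F$ fails to be strictly increasing.

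Next I would replace the supremum defining $D_n$ by a finite maximum over the sample. Writing $\xi_{(1)}\leq\cdots\leq\xi_{(n)}$ for the order statistics, the empirical CDF $F_n$ is a right-continuous step function jumping by $1/n$ at each $\xi_{(i)}$, while $F$ is non-decreasing; splitting $D_n=\max\{\sup_x(F_n(x)-F(x)),\ \sup_x(F(x)-F_n(x))\}$ and examining each piece on the intervals between consecutive order statistics (using left limits, together with the continuity of $F$, for the second piece) yields
\begin{align*}
D_n=\max_{1\leq i\leq n}\max\left\{\frac{i}{n}-F(\xi_{(i)}),\ F(\xi_{(i)})-\frac{i-1}{n}\right\}.
\end{align*}

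The decisive step is then the identity $F(\xi_{(i)})=U_{(i)}$, where $U_{(1)}\leq\cdots\leq U_{(n)}$ are the order statistics of $U_1,\ldots,U_n$: since $F$ is non-decreasing, the numbers $F(\xi_{(1)}),\ldots,F(\xi_{(n)})$ are already in increasing order and form the same multiset as $U_1,\ldots,U_n$. Substituting,
\begin{align*}
D_n=\max_{1\leq i\leq n}\max\left\{\frac{i}{n}-U_{(i)},\ U_{(i)}-\frac{i-1}{n}\right\},
\end{align*}
so $D_n$ is a fixed, $F$-free function of the uniform order statistics $(U_{(1)},\ldots,U_{(n)})$. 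Their joint law depends only on $n$, hence so does the law of $D_n$, which proves the theorem.

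I expect the main obstacle to be the second step: carefully verifying that the two one-sided suprema over all of $\mathbb{R}$ are exactly captured by the values at the order statistics and their left endpoints, correctly handling flat stretches of $F$, the off-by-one in the $(i-1)/n$ term, and the possibility of ties in the sample (a null event when $F$ is continuous, but worth an explicit remark). Once $D_n$ is reduced to the displayed closed form, the probability integral transform and the concluding ``depends only on $n$'' are immediate.
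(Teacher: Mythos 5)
Your proposal is correct, and it is the standard Kolmogorov argument; the main thing to note is that the paper does not actually prove this theorem at all --- it states it as a recalled classical result with a citation. The closest the paper comes to your argument is in Section 3, where it (i) proves the conditional analogue by observing that $Y_i=F_{\zeta_i}(\xi_i)$ is uniform on $[0,1]$, and (ii) asserts without proof the identity
\begin{align*}
\sup_{x\in\mathbb{R}}\left|\frac{1}{n}\sum_{i=1}^n\mathbf{1}_{(-\infty,x]}(\xi_i)-F(x)\right|
=\sup_{x\in[0,1]}\left|\frac{1}{n}\sum_{i=1}^n\mathbf{1}_{[0,x]}\left(F(\xi_i)\right)-x\right|,
\end{align*}
which is exactly the probability integral transform step you begin with. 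Your route then adds a reduction to the order statistics, giving the closed form $\max_i\max\{i/n-U_{(i)},\,U_{(i)}-(i-1)/n\}$; this is more than is strictly needed --- once you know $D_n=\sup_{t\in[0,1]}|G_n(t)-t|$ with $G_n$ the empirical CDF of the iid uniforms $U_i=F(\xi_i)$, the law of $D_n$ is already a functional of $n$ iid uniforms and hence depends only on $n$ --- but it buys you the exact finite-sample formula used to tabulate the KS distribution, so it is a worthwhile detour. You are also right to insist on the continuity of $F$: the paper omits this hypothesis in the statement (it imposes continuity only on $F_{\zeta_i}$ in its Theorem 3.2), and the result is false without it, so your explicit handling of flat stretches and of the a.s.\ absence of ties is a genuine improvement rather than pedantry.
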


\noindent Notice that $D_n$ is in itself a real random variable. 
The PDF and CDF of $D_n$ is a function of $n$ only, and will be the same whatever $F(\cdot)$ is.
This distribution is called the KS distribution and tables are available upto $n=100$. 
There is a Theorem which describes the asymptotic behaviour of the KS distribution. 

\begin{Theorem}
In the limit $n \longrightarrow \infty$, $\sqrt{n}D_n$ is asymptotically Kolmogorov distributed with the CDF 
\begin{equation*}
Q(x)=1-2\sum_{k=1}^\infty (-1)^{k-1}e^{-2k^2x^2}
\end{equation*}
that is to say 
\begin{equation*}
\lim_{n\longrightarrow \infty} P(\sqrt{n} D_n \leq x)=Q(x). 
\end{equation*}
\end{Theorem}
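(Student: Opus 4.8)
The plan is to eliminate the dependence on $F$ by the probability integral transform and then to identify the limit of $\sqrt{n}D_n$ with the supremum of a Brownian bridge via Donsker's invariance principle. First I would observe that, $F$ being continuous, the variables $U_i := F(\xi_i)$ are independent and uniform on $[0,1]$; writing $G_n$ for their empirical CDF and using the monotonicity of $F$, one checks that $D_n = \sup_{t\in[0,1]}\bigl|G_n(t)-t\bigr|$ almost surely. Hence it suffices to prove the statement in the uniform case, which also re-derives Theorem~\ref{chap_4_thm_ks_test} as a byproduct.

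Second, I would introduce the (uniform) empirical process $\alpha_n(t) := \sqrt{n}\,(G_n(t)-t)$ for $t\in[0,1]$, viewed as a random element of the Skorokhod space $D[0,1]$. The analytic heart of the argument is Donsker's theorem: $\alpha_n$ converges weakly to a standard Brownian bridge $B$, the centered Gaussian process on $[0,1]$ with covariance $\min(s,t)-st$. This splits into (i) convergence of the finite-dimensional distributions, which follows from the multivariate central limit theorem applied to the vectors of indicators $\mathbf{1}_{[0,t_j]}(U_i)$, and (ii) tightness of $(\alpha_n)$ in $D[0,1]$, typically obtained from an increment moment bound of the form $\mathbb{E}\bigl(\alpha_n(t)-\alpha_n(s)\bigr)^{2}\le |t-s|$ (or a fourth-moment refinement) together with a Billingsley-type tightness criterion.

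Third, since the map $x\mapsto \|x\|_\infty$ is continuous on $D[0,1]$ with respect to the uniform topology (and hence on the relevant set for the Skorokhod topology), the continuous mapping theorem gives $\sqrt{n}D_n = \|\alpha_n\|_\infty \Rightarrow \sup_{t\in[0,1]}|B(t)| =: M$. It then remains to compute the law of $M$. Writing $B(t)=W(t)-tW(1)$ for a standard Brownian motion $W$ and applying the reflection principle with an inclusion–exclusion over successive reflections at the levels $\pm x$ (equivalently, Doob's formula for the first exit time of the bridge from the symmetric strip $(-x,x)$) yields $P(M\le x) = 1 - 2\sum_{k=1}^\infty(-1)^{k-1}e^{-2k^2x^2} = Q(x)$. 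Because $Q$ is continuous, weak convergence upgrades to $\lim_{n\to\infty}P(\sqrt{n}D_n\le x)=Q(x)$ at every $x$, as claimed.

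The main obstacle is the tightness half of Donsker's theorem; the finite-dimensional convergence and the final strip-exit computation are comparatively routine. If one prefers to avoid empirical-process machinery, an alternative is the direct route: obtain an exact finite-$n$ expression for $P(D_n\le x)$ by counting lattice paths confined to a band (a reflection/ballot argument, which also exhibits the dependence on $n$ alone), and then carry out a careful asymptotic analysis of that expression as $n\to\infty$; this is elementary but the limit extraction is delicate and in effect recovers the theta-type identity underlying $Q$.
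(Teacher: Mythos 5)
The paper does not prove this theorem: it is recalled in Section~2 as a known classical result (Kolmogorov's limit theorem) with a citation, so there is no in-paper argument to compare yours against. Judged on its own merits, your outline is the standard modern proof and its architecture is correct: reduce to the uniform case by the probability integral transform (legitimate here since the paper posits a density, so $F$ is continuous), apply Donsker's theorem to the uniform empirical process, push the supremum through with the continuous mapping theorem (using that $\Vert\cdot\Vert_\infty$ is Skorokhod-continuous at the almost surely continuous paths of the Brownian bridge), and compute $P\left(\sup_{t}|B(t)|\leq x\right)$ by repeated reflection, which is exactly $Q(x)$; continuity of $Q$ on $(0,\infty)$ then upgrades weak convergence to pointwise convergence of the distribution functions.

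Two points would need genuine work in a full write-up. First, the increment bound $\mathbb{E}\left(\alpha_n(t)-\alpha_n(s)\right)^2\leq|t-s|$ does \emph{not} by itself give tightness --- the Billingsley/Kolmogorov--Chentsov criteria require an exponent strictly greater than one in $|t-s|$, and the compensated-Poisson example shows a bare second-moment bound of this form is consistent with a discontinuous limit. Your parenthetical ``fourth-moment refinement'' (the bound $\mathbb{E}\left[(\alpha_n(t)-\alpha_n(s))^2(\alpha_n(u)-\alpha_n(t))^2\right]\leq C(u-s)^2$, or a chaining argument) is therefore mandatory, not optional; this is the real content of Donsker's theorem for empirical processes. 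Second, the identity $D_n=\sup_{t\in[0,1]}|G_n(t)-t|$ deserves a line of justification: one has $\sup_x|F_n(x)-F(x)|=\sup_{t\in F(\mathbb{R})}|G_n(t)-t|$, and one then uses that a continuous $F$ has range dense in $[0,1]$ and that $G_n(t)-t$ is a c\`adl\`ag function whose supremum over a dense set equals its supremum over $[0,1]$. With those gaps filled the proposal is a correct proof; your alternative finite-$n$ ballot-path route is also viable and closer to Kolmogorov's original derivation, at the cost of a more delicate limit extraction.
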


\section{Conditional Kolmogorov-Smirnov Test}
The conditional Kolmogorov-Smirnov test was first developed in the thesis \cite{tommy_thesis}. 
Let $\zeta_1,\zeta_2,\ldots,\zeta_n$ be $n$ iid real random variables. 
They are $n$ empirical observations of a random variable $\zeta$. 
Now suppose that each of the $\xi_1,\xi_2,\ldots,\xi_n$ is conditioned and dependent on the corresponding $\zeta_1,\zeta_2,\ldots,\zeta_n$.
The conditional CDF $F(\cdot,\cdot)$ is
\begin{align*}
P(\xi_i\leq x\,|\,\zeta_i)=F_{\zeta_i}(x)=\int^x_{-\infty}f(s,\zeta_i)\,ds
\end{align*}
But $\xi_1,\xi_2,\ldots,\xi_n$ are empirical measurements of the same random variable $\xi$. 
The CDF for $\xi$ is 
\begin{align*}
P(\xi\leq x)=F(x)=\int^x_{-\infty}\int^{u=+\infty}_{u=-\infty}f(s,u)m(u)\,du\,ds
\end{align*}
where $m(\cdot)$ is the PDF for $\zeta$, that is 
\begin{align*}
P(\zeta\in A)=\int_Am(s)\,ds
\end{align*}
In our context we have the problem that the random variables are not identically distributed under the null hypothesis. 
The  $\xi_1,\xi_2,\ldots,\xi_n$ and $\zeta_1,\zeta_2,\ldots,\zeta_n$ are obtained experimentally and $F_{\zeta_i}(\xi_i)$ can be  calculated but a PDF for $\zeta_i$, that is $m(\cdot)$, has no easy expression. 
We still want to perform a statistics test that is similar to the KS test even in such situations where the distribution $m(\cdot)$ of $\zeta$ is unknown. 
First we define what we call the total null hypothesis and the conditional null hypothesis. 

\begin{Definition}
Let $\xi_1, \xi_2, \ldots, \xi_n$ be $n$ empirical observations of a random variable $\xi$. 
The total null hypothesis is that
$\xi$ is distributed with the CDF $F(\cdot)$.
The conditional null is that
 each $\xi_i$ is distributed with the conditional CDF $F_{\zeta_i}(\cdot)$.
\end{Definition}

\noindent A new statistical test is developed, which is similar to the KS test. 

\begin{Theorem}
Suppose the conditional null hypothesis is true. 
Let $F_{\zeta_i}(\cdot)$ be continuous. 
Let $S_n$ be the statistic given by 
\begin{align*}
S_n=\sup_{x\in[0,1]}
\left|
\frac{1}{n}
\sum^n_{i=1}
\mathbf{1}_{[0,x]}
\left(
F_{\zeta_i}(\xi_i)
\right)
-x
\right|
\end{align*}
then $S_n$ is KS distributed. 
\end{Theorem}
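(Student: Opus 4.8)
The plan is to reduce the statement to the classical Kolmogorov--Smirnov theorem, Theorem~\ref{chap_4_thm_ks_test}, by means of the probability integral transform applied \emph{conditionally} on the observations $\zeta_i$. The key point is that, although the $\xi_i$ are not identically distributed under the conditional null hypothesis, the transformed variables $U_i := F_{\zeta_i}(\xi_i)$ \emph{are} identically distributed --- in fact iid and uniform on $[0,1]$. Once that is established, $S_n$ is literally the KS statistic for the sample $U_1,\dots,U_n$ tested against the uniform CDF, and Theorem~\ref{chap_4_thm_ks_test} finishes the argument.

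First I would carry out the conditional probability integral transform. Fix $i$ and condition on $\zeta_i=z$. Under the conditional null hypothesis $\xi_i$ then has CDF $F_z(\cdot)$, which is continuous by assumption, so the standard probability integral transform gives that $U_i=F_z(\xi_i)$ is uniformly distributed on $[0,1]$, conditional on $\zeta_i=z$. Since this holds for ($m$-almost) every value $z$, the conditional law of $U_i$ given $\zeta_i$ does not depend on $\zeta_i$ and equals the uniform law on $[0,1]$; hence $U_i$ is unconditionally uniform on $[0,1]$ and independent of $\zeta_i$. Next I would invoke that the pairs $(\xi_i,\zeta_i)$ are iid across $i$: since $U_i=h(\xi_i,\zeta_i)$ for the single measurable map $h(x,z)=F_z(x)$, the variables $U_1,\dots,U_n$ are iid, each uniform on $[0,1]$.

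With $U_1,\dots,U_n$ iid uniform on $[0,1]$, I would identify $S_n$ with a classical KS statistic. Write $G(x)=x$ for the uniform CDF on $[0,1]$ and $G_n(x)=\frac1n\sum_{i=1}^n \mathbf{1}_{(-\infty,x]}(U_i)$ for the corresponding empirical CDF. Because each $U_i\in[0,1]$ almost surely, one has $\mathbf{1}_{[0,x]}(U_i)=\mathbf{1}_{(-\infty,x]}(U_i)$ a.s.\ for $x\in[0,1]$, while $G_n(x)-G(x)=0$ for $x<0$ and $G_n(x)-G(x)=0$ for $x>1$, so that
\begin{align*}
S_n=\sup_{x\in[0,1]}\left|G_n(x)-G(x)\right|=\sup_{x\in\mathbb{R}}\left|G_n(x)-G(x)\right|=\left\Vert G_n-G\right\Vert_\infty
\end{align*}
almost surely. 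The right-hand side is exactly the KS statistic $D_n$ for the iid sample $U_1,\dots,U_n$ whose common CDF is $G$, so the null hypothesis of the classical test holds, and Theorem~\ref{chap_4_thm_ks_test} gives that the distribution of $S_n$ depends only on $n$, i.e.\ $S_n$ is KS distributed.

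The routine parts above --- the almost-sure identification of the indicator functions and the reduction of the supremum from $\mathbb{R}$ to $[0,1]$ --- are straightforward. The step that requires care, and which I regard as the crux, is the conditional application of the probability integral transform: one must genuinely work conditionally on $\zeta_i$ (the marginal CDF of $\xi_i$ is \emph{not} $F_{\zeta_i}$, so the transform cannot be applied unconditionally), and then observe that, because the resulting conditional law is the \emph{same} uniform law for every $z$, the $U_i$ become unconditionally iid uniform and all dependence on the $\zeta_i$ is absorbed. The continuity hypothesis on $F_{\zeta_i}$ is precisely what makes the transform produce an exact uniform distribution rather than merely a stochastic bound.
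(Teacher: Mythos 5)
Your proposal is correct and follows essentially the same route as the paper: transform each observation by its own conditional CDF via the probability integral transform, observe that the resulting variables are iid uniform on $[0,1]$, and identify $S_n$ with the classical KS statistic of that uniform sample so that Theorem~\ref{chap_4_thm_ks_test} applies. If anything, your version is more careful than the paper's at the crux step, since you make explicit that the computation is conditional on $\zeta_i$ and that the $U_i$ are unconditionally iid because the conditional law is the same uniform law for every value of $\zeta_i$, a point the paper's proof glosses over.
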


\begin{proof}
Denote
\begin{align*}
Y_i=F_{\zeta_i}(\xi_i)
\end{align*}
which means 
\begin{align*}
P\left(Y_i\leq x\right)&=P\left(F_{\zeta_i}(\xi_i)\leq x\right)\\
&=P\left(\xi_i\leq F^{-1}_{\zeta_i}(x)\right)\\
&=F_{\zeta_i}\left(F^{-1}_{\zeta_i}(x)\right)\\
&=x
\end{align*}
and $0\leq Y_i\leq 1$, so $Y_i$ is uniformly distributed on $[0,1]$. 
Note that $F_{\zeta_i}(\cdot)$ is a function of one variable only. 
Let 
\begin{align*}
F_n(x)=
\frac{1}{n}
\sum_{i=1}^n
\mathbf{1}_{[0,x]}
(Y_i)
=
\frac{1}{n}
\sum_{i=1}^n
\mathbf{1}_{[0,x]}
\left(F_{\zeta_i}(\xi)\right)
\end{align*}
where $F_n(\cdot)$ is the empirical CDF of a uniformly distributed random variable, computed using $n$ observations. 
The statistic $S_n$ is the suprenum metric
\begin{align*}
S_n=\left\Vert F_n-x   \right\Vert_\infty
=\sup_{x\in[0,1]}
\left|
\frac{1}{n}
\sum^n_{i=1}
\mathbf{1}_{[0,x]}
\left(
F_{\zeta_i}(\xi_i)
\right)
-x
\right|
\end{align*}
So clearly $S_n$ is KS distributed. 
\end{proof}

\noindent We call $S_n$ the conditional KS statistic. 
Compare this to the original KS statistic, which under the assumption of the total null hypothesis can be rewritten as 
\begin{align*}
D_n=\sup_{x\in\mathbb{R}}
\left|
\frac{1}{n}
\sum_{i=1}^n
\mathbf{1}_{(-\infty,x]}
(\xi_i)
-F(x)
\right|
=\sup_{x\in[0,1]}
\left|
\frac{1}{n}
\sum^n_{i=1}
\mathbf{1}_{[0,x]}
\left(
F(\xi_i)
\right)
-x
\right|
\end{align*}
When both the total and conditional hypothesis are true $D_n$ and $S_n$ are KS distributed, that is 
\begin{align*}
P\left(D_n\in A \right)=P\left(S_n\in A \right)
\quad \text{and} \quad 
P\left(D_n \leq x\right)=P\left(S_n\leq x\right)
\end{align*}
The subtlety here is that
$D_n$ and $S_n$ are different objects, yet they have the same distribution. 
$D_n$ is KS distributed under the total null hypothesis, 
whereas $S_n$ is KS distributed under the conditional null hypothesis. 
This can be explained in another way.
We have $n$ experimental observations of a random variable $\xi$ denoted by $\xi_1,\xi_2, \ldots, \xi_n$ and each are conditioned on observations of another random variable $\zeta$ denoted by $\zeta_1, \zeta_2, \ldots, \zeta_n$. 
The $D_n$ is KS distributed if the random variable $\xi$ is distributed by CDF $F(\cdot)$, 
but the $S_n$ is KS distributed if each $\xi_i$ is conditionally distributed by the CDF $F_{\zeta_i}(\cdot)$. 

\section{Conclusion}
An example of the conditional Kolmogorov-Smirnov test reliably working with experimental data is studied in the paper \cite{tommy_paper_B}, which is also the first time of it being implemented.

\addcontentsline{toc}{section}{References}
\bibliographystyle{ieeetr}%probably go for this one
\bibliography{paper_reportreferences}

\begin{thebibliography}{1}

\bibitem{ks_test_1}
A.~N. Kolmogorov, ``{Sulla Determinazione Empirica di una Legge di
  Distribuzione},'' {\em Giornale dell'Istituto Italiano degli Attuari},
  vol.~4, pp.~83--91, 1933.

\bibitem{tommy_thesis}
T.~Liu, {\em {Stochastic Resonance for a Model with Two Pathways}}.
\newblock PhD thesis, University of Reading, 2016.
\newblock Available on arXiv.

\bibitem{tommy_paper_B}
T.~Kuna, V.~Lucarini, and T.~Liu, ``A comparative study of stochastic resonance
  for a model with two pathways by escape times, linear response, invariant
  measures and the conditional kolmogorov-smirnov test,'' 2018.
\newblock Available on arXiv.

\end{thebibliography}

\end{document}